\documentclass[a4paper,reqno]{amsart}

\usepackage[T1]{fontenc}
\usepackage[utf8]{inputenc}
\usepackage[unicode]{hyperref}

\usepackage{amsmath,amssymb,amsthm,mathtools}
\usepackage{cite}
\usepackage{tikz, tikz-cd, stmaryrd}

\usepackage{enumitem}
\setlist[enumerate]{label = \textup{\textup{(\alph*)}},ref = \textup{(\alph*}),leftmargin=\parindent+2ex,align=left,labelwidth=3ex,itemsep=\topsep}
\setlist[itemize]{itemsep=\topsep}

\theoremstyle{plain}
\newtheorem{thm}{Theorem}[section]
\newtheorem{prop}[thm]{Proposition}
\newtheorem{lem}[thm]{Lemma}

\theoremstyle{definition}
\newtheorem{dfn}[thm]{Definition}
\newtheorem{ex}[thm]{Example}
\newtheorem{rem}[thm]{Remark}
\newtheorem{nota}[thm]{Notation}
\numberwithin{equation}{section}
\def\al{\alpha}
\def\be{\beta}
\def\ga{\gamma}
\def\de{\delta}
\def\io{\iota}
\def\ep{\epsilon}
\def\la{\lambda}

\def\th{\theta}
\def\ze{\zeta}

\def\si{\sigma}

\def\De{\Delta}

\def\La{\Lambda}
\def\Om{\Omega}

\def\Th{\theta}

\def\Z{{\mathbin{\mathbb Z}}}
\def\Q{{\mathbin{\mathbb Q}}}

\def\C{{\mathbin{\mathbb C}}}

\def\A{{\mathbin{\mathcal A}}}
\def\M{{\mathbin{\mathcal M}}}
\def\cD{{\mathbin{\mathcal D}}}

\def\E{{\mathbin{\mathcal E}}}
\def\X{X}

\def\ra{\rightarrow}

\def\longra{\longrightarrow}

\DeclareMathOperator\colim{colim}

\SetSymbolFont{stmry}{bold}{U}{stmry}{m}{n}
\DeclareFontFamily{U}{rsfs}{} 
\DeclareFontShape{U}{rsfs}{n}{it}{<->
rsfs10}{} \DeclareSymbolFont{mscr}{U}{rsfs}{n}{it}
\DeclareSymbolFontAlphabet{\scr}{mscr}
\def\mathscr{\scr}

\def\O{\mathrm{O}}\def\U{\mathrm{U}}\def\SU{\mathrm{SU}}

\DeclareMathOperator\id{id}

\def\e#1\e{\begin{equation}#1\end{equation}}
\def\ea#1\ea{\begin{align}#1\end{align}}

\def\cL{\mathcal{L}}
\def\CP{\mathbb{CP}}
\def\P{\mathbb{P}}

\def\pt{\mathrm{pt}}

\def\op{\oplus}
\def\ot{\otimes}
\def\iy{\infty}
\def\longra{\longrightarrow}
\def\t{\times}
\def\bt{\boxtimes}
\def\an#1{\langle #1 \rangle}

\def\cS{\mathcal{S}}

\def\top{\mathrm{top}}
\DeclareMathOperator\rk{rk}

\def\vac{\Om}

\title[Vertex F-algebras and complex oriented homology]{Vertex F-algebra structures on the\\complex oriented homology of H-spaces}
\author{Jacob Gross and Markus Upmeier}

\date{\today}

\keywords{Vertex algebra, formal group law, generalized cohomology, $H$-space}
\subjclass[2000]{17B69, 55N20}

\begin{document}

\begin{abstract}
We give a topological construction of graded vertex $F$-algebras that generalizes Joyce's vertex algebra to complex-oriented homology. Given an H-space $\X$ with a $B\U(1)$-action, a choice of  K-theory class, and a complex oriented homology theory $E,$ we build a graded vertex $F$-algebra structure on $E_*(\X)$ where $F$ is the formal group law associated with $E.$
\end{abstract}

\maketitle

\setcounter{tocdepth}{1}
\tableofcontents

\section{Introduction and Results}
\label{s1}


The algebraic topology of moduli stacks, arising for example in algebraic geometry and gauge theory, is of fundamental importance for the study of invariants. Let $\A$ be an additive $\C$-linear dg-category, whose $\tau$-stable objects we wish to classify, for $\tau$ a stability condition. The category $\A$ has an associated moduli stack $\M_\A$ by~\cite{ToVa}. In \cite{Joy}, Joyce constructs a graded vertex algebra on the ordinary homology $H_*(\M_\A).$ Vertex algebras are algebraic structures with origins in conformal field theory which can be regarded as \emph{singular} commutative rings whose operation ${Y\colon V\ot V\to V(\!(z)\!)},$ the \emph{state-to-field correspondence}, takes values in Laurent polynomials. This profound algebraic structure is used to describe wall-crossing formulas relating the virtual fundamental classes $[\M_\A]_\tau^\mathrm{virt}, [\M_\A]_{\tau'}^\mathrm{virt} \in H_*(\M_\A)$ for different stability conditions. These are powerful tools for computing invariants.

Motivated by physics, many authors currently investigate \emph{refined invariants} such as $K$-theoretic Donaldson--Thomas invariants \cite{GNY,GKW,KS,Th}. Here the virtual classes should viewed in K-homology $K_*(\M_\A).$ As a first step towards extending wall-crossing formulas to refined invariants, we here extend Joyce's construction to any generalized (complex oriented) homology theory $E_*$ with associated formal group law $F(z,w).$ Our main result constructs a vertex $F$-algebra structure on $E_*(\M_\A)$ in the sense of Li~\cite{Li}.

In addition, our construction of vertex $F$-algebra works in greater generality, namely for any topological H-space (\emph{i.e.}~abelian group up to homotopy) with an action of $B\U(1).$ 

Let $E^*$ be a complex oriented generalized cohomology theory with associated formal group law\/ $F(z,w)$ over its coefficient ring\/ $R_*,$ see \textup{\S\ref{s3}}. As a preliminary result, we present a Laurent-polynomial version of the Conner-Floyd Chern classes (see Definition~\textup{\ref{s3dfn3}}) with values in $E^*.$



\begin{thm}
\label{s1thm1} For every class\/ $\Th\in K^0(X)$ in the topological $K$-theory of topological space\/ $X$ there is an\/ $R$-linear transformation
\ea
\label{s1eq2}
 (-)\cap C_z^E(\th)\colon E_*(X)&\longra E_*(X)\llbracket z\rrbracket[z^{-1}]
 & a\longmapsto a\cap C_z^E(\th),
\ea
of degree\/ $-2r$ if\/ $\th$ has constant rank\/ $r\in\Z,$ with the following properties:
\begin{enumerate}
\item\textup(Naturality.\textup)~For continuous\/ $f\colon X'\to X,$ $\th\in K^0(X),$ and\/ $a'\in E_*(X')$
\e
\label{s1eq3}
 f_*\bigl(a'\cap C_z^E(f^*(\th))\bigr)=f_*(a')\cap C_z^E(\th).
\e
\item\textup(Direct sums.\textup)~For\/ $\ze,$ $\th\in K^0(X)$ and\/ $a\in E_*(X)$ we have
\e
\label{s1eq4}
 a\cap C_z^E(\ze+\th)=\bigl[a\cap C_z^E(\ze)\bigr]\cap C_z^E(\th).
\e
\item\textup(Normalization.\textup)~ 
For a complex line bundle\/ ${L\to X}$ and\/ $a\in E_*(X)$ we have
\e
\label{s1eq5}
 a\cap C_z^E(L)=a\cap F(z,c_1^E(L)).
\e
More generally, for any\/ ${\th\in K^0(X)}$ we have
\e 
\label{s1eq6}
a\cap C^E_z(L\ot\th) = i_{z, c^E_1(L)} \bigl(a\cap C^E_{F(z,c^E_1(L))}(\th)\bigr).
\e
\end{enumerate}
\end{thm}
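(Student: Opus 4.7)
The plan is to define the operation first on line bundles using \eqref{s1eq5} as the definition, then extend to vector bundles by the splitting principle, and finally to virtual classes $\Theta\in K^0(X)$ by inverting line-bundle operations. Properties (a)--(c) essentially force the construction; the substantive work is verifying well-definedness and the tensor formula \eqref{s1eq6}.

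\emph{Steps 1 and 2 (line and vector bundles).} For a line bundle $L\to X$, set $a\cap C_z^E(L):=a\cap F(z,c_1^E(L))$, where $F(z,c_1^E(L))\in E^*(X)\llbracket z\rrbracket$ is defined skeleton-by-skeleton: on any finite skeleton $c_1^E(L)$ sits in positive filtration and is therefore nilpotent, so at each power of $z$ only finitely many formal-group coefficients contribute, and the compatible system assembles in the Milnor limit. Naturality (a) follows from $f^*c_1^E(L)=c_1^E(f^*L)$, and the degree shift is $-2$. For a rank-$r$ bundle $V\to X$, pull back along the complete flag bundle $\pi\colon\mathrm{Fl}(V)\to X$, where $\pi^*V\cong L_1\oplus\cdots\oplus L_r$ and $\pi^*$ identifies $E^*(X)$ with the subring symmetric in the Chern roots $\alpha_i=c_1^E(L_i)$ (iterated projective-bundle formula for complex-oriented $E^*$). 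Property (b) forces $\pi^*C_z^E(V)=\prod_{i=1}^r F(z,\alpha_i)$, which is symmetric in the $\alpha_i$ and therefore descends uniquely to $C_z^E(V)\in E^*(X)\llbracket z\rrbracket$; the bundle case of (b) is automatic on $\mathrm{Fl}(V\oplus V')$.

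\emph{Step 3 (virtual classes).} For $\Theta=[V]-[W]$ set $C_z^E(\Theta):=C_z^E(V)\cdot C_z^E(W)^{-1}$. The inverse exists in $E^*(X)\llbracket z\rrbracket[z^{-1}]$: writing $F(z,\alpha_i)=z+\alpha_i\cdot H(z,\alpha_i)$ with $H$ a power series and using nilpotence of $\alpha_i$ on finite skeleta, the geometric series $\bigl(1+\alpha_i z^{-1}H\bigr)^{-1}=\sum_k\bigl(-\alpha_i z^{-1}H\bigr)^k$ terminates, so $F(z,\alpha_i)^{-1}=z^{-1}\sum_k\bigl(-\alpha_i z^{-1}H\bigr)^k$ is a Laurent expression in $z$ with polynomial-in-$\alpha_i$ coefficients. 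The product $\prod_i F(z,\alpha_i)^{-1}$ is symmetric in the $\alpha_i$ and descends to $E^*(X)\llbracket z\rrbracket[z^{-1}]$. Independence of the decomposition $\Theta=[V]-[W]$ follows from the bundle case of (b): if $[V]-[W]=[V']-[W']$ then $[V]+[W']=[V']+[W]$, whence $C_z^E(V)\cdot C_z^E(W')=C_z^E(V')\cdot C_z^E(W)$, and the quotient is the same.

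\emph{Step 4 (normalization).} Formula \eqref{s1eq5} is the definition. For \eqref{s1eq6}, reduce via the splitting principle to $\Theta=L'$ a single line bundle: associativity of the formal group law together with $c_1^E(L\otimes L')=F(c_1^E(L),c_1^E(L'))$ gives
\[
F(z,c_1^E(L\otimes L'))=F(z,F(c_1^E(L),c_1^E(L')))=F(F(z,c_1^E(L)),c_1^E(L')),
\]
which equals $C^E_{F(z,c_1^E(L))}(L')$ once the operator $i_{z,c_1^E(L)}$ records the substitution $z\mapsto F(z,c_1^E(L))$, expanding the right-hand side as a power series in $c_1^E(L)$ with Laurent-in-$z$ coefficients. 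The main obstacle is Step 3: pinpointing the precise Laurent ring in which $C_z^E(W)^{-1}$ lives so that the output genuinely sits in $E_*(X)\llbracket z\rrbracket[z^{-1}]$ (uniform control of negative $z$-powers across skeleta) and matches the asymmetric expansion convention $i_{z,c_1^E(L)}$ of \eqref{s1eq6}; reconciling this asymmetric expansion with the symmetric descent from the flag bundle is the technical heart of the argument.
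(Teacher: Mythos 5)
Your construction follows the same route as the paper's proof (line bundles via $F(z,c_1^E(L))$, splitting principle, formal inversion for virtual classes, reduction to finite complexes), with only cosmetic differences: you split on the full flag bundle where the paper iterates projectivizations, and you invert $C_z^E(W)$ by a terminating geometric series where the paper uses a complement bundle $W'$ with $W\op W'\cong\underline{\C}^N,$ giving $C_z^E(W)\cup C_z^E(W')=z^N$ and hence $C_z^E(W)^{-1}=z^{-N}C_z^E(W')$ with pole order bounded by $N$ rather than by the nilpotency exponent of the Chern roots. However, the two points you defer as ``the main obstacle'' and ``the technical heart'' are precisely where the proof needs ideas you have not supplied, so as written there is a gap.

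First, for \eqref{s1eq6} the reduction ``to $\Theta=L'$ a single line bundle'' proves nothing beyond \eqref{s4eq4}: for an honest bundle $i_{z,c_1^E(L)}$ acts as the identity and \eqref{s1eq6} is just associativity of $F.$ The content of \eqref{s1eq6} is the case $\th=[V]-[\underline{\C}^\ell],$ where $C^E_{F(z,c_1^E(L))}(\th)$ contains $F(z,c_1^E(L))^{-\ell}$ and one must identify the ring-theoretic inverse of $F(z,c_1^E(L))$ in $E^*(X)\llbracket z\rrbracket[z^{-1}]$ with the expansion $i_{z,w}F(z,w)^{-1}$ evaluated at the nilpotent $w=c_1^E(L).$ The paper closes this in one line — both expressions invert the same element of the same ring, so $i_{z,w}F(z,c_1^E(L))^n=F(z,c_1^E(L))^n$ for all $n\in\Z$ — but that observation is the whole point of part (c) and is absent from your argument. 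Second, uniform control of negative powers of $z$ across skeleta cannot be achieved by assembling a cohomology class in a Milnor-type limit: the pole orders are genuinely unbounded as the skeleton grows, and the restrictions $C_z^E(\th|_{X_i})$ only define a class in $\hat{E}^*(X)(\!(z)\!).$ The paper instead defines the operation on \emph{homology} directly, using $E_*(X)\cong\colim_i E_*(X_i)$: each class $a$ is supported on a finite subcomplex, where the cap product lands in $E_*(X_i)\llbracket z\rrbracket[z^{-1}]$ and pushes forward to $E_*(X)\llbracket z\rrbracket[z^{-1}].$ Without this switch from cohomology classes to homology operations the stated codomain is not obtained. (You also omit the final CW-approximation step for arbitrary topological spaces, and for non-compact $X$ a global decomposition $\th=[V]-[W]$ need not exist, which is another reason to work one finite subcomplex at a time.)
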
 

Here, as usual, the variable $z$ has degree $-2.$ We prove Theorem~\ref{s1thm1} in \S\ref{s4}. The notations used in \eqref{s1eq5} and \eqref{s1eq6} will be explained in Notations~\ref{s3nota1} \& \ref{s4nota1} below.
\medskip

For our main result, let $X$ be an H-space with an operation ${\Phi\colon X\t X\to X}$ that is associative, commutative, and has a unit $e\in X$ up to homotopy. Recall that the classifying space $B\U(1)$ for complex line bundles is an H-space with the tensor product $\mu_{B\U(1)}$ and trivial bundle $e_{B\U(1)}.$ Assume there is an action ${\Psi}$ of $B\U(1)$ on $X$ up to homotopy, meaning $\Psi\circ(\id_{B\U(1)}\t\Psi)\simeq\Psi\circ(\mu_{B\U(1)}\t\id_X)$ and $\Psi(e_{B\U(1)},-)\simeq{\id_X}.$ Suppose\/ $\Psi(e,-)\simeq e$ is an h-fixed point and ${\Phi\circ(\Psi\t\Psi)\circ\de}\simeq{\Psi\circ(\Phi\t\id_{B\U(1)})},$ where $\de(x_1,x_2,g)=(x_1,g,x_2,g).$ The set of connected components $\pi_0(X)$ is a monoid with unit $\vac=[e]$ and operation $\al+\be=\Phi_*(\al\boxtimes\be)$ and we partition $X=\coprod_{\al\in\pi_0(X)} X_\al.$ Write ${\Phi_{\al,\be}\colon X_\al\t X_\be\to X_{\al+\be}},$ ${\Psi_\al\colon B\U(1)\t X_\al}\to X_\al$ for the restrictions. Let $\th_{\al,\be}\in K^0(X_\al\t X_\be)$ for all $\al,\be.$

\begin{thm}
\label{s1thm2}
Given\/ $(X,\Phi,e,\Psi)$ as above, suppose the following identities hold  for all\/ $\al,\be, \ga \in \pi_0(X)$:
\begin{align}
\label{s1eq7}
(\Phi_{\al,\be}\t\id_{X_\ga})^*(\th_{\al+\be,\ga})&=\pi^*_{\al,\ga}(\th_{\al,\ga}) + \pi^*_{\be,\ga}(\th_{\be,\ga}),\\
\label{s1eq8}
(\id_{X_\al}\t\Phi_{\be,\ga})^*(\th_{\al,\be+\ga})&=\pi^*_{\al,\be}(\th_{\al,\be}) + \pi^*_{\al,\ga}(\th_{\al,\ga}),\\
\label{s1eq9}
(\Psi_\al\t\id_{X_\be})^*(\th_{\al,\be})&=\pi_{B\U(1)}^*(\mathcal{L})^{\;}\,\ot \pi^*_{\al,\be}(\th_{\al,\be}),\\
\label{s1eq10}
(\id_{X_\al}\t\Psi_\be)^*(\th_{\al,\be})&=\pi_{B\U(1)}^*(\mathcal{L})^{\;\mathclap{\vee}}\,\ot\pi_{\al,\be}^*(\th_{\al,\be}),\\
\label{s1eq11}
\th|_{X_\al\t\{\vac\}}&=0,\quad\th|_{\{\vac\}\t X_\be}=0,\\
\label{s1eq12}
\si^*(\th_{\be,\al})&=(\th_{\al,\be})^\vee.
\end{align}
Here\/ $\si$ swaps the factors of\/ ${X_\al\t X_\be}$ and\/ $\mathcal{L}\to B\U(1)$ is the universal line bundle with dual\/ $\mathcal{L}^\vee.$ With the\/ $F$-shift operator\/ $\cD(z)$ of~\eqref{s3eq3} below, the graded\/ $R_*$-module
\e
\label{s1eq13}
 V_*=\bigoplus\nolimits_{\al\in\pi_0(X)} E_{*-\rk\th_{\al,\al}}(X_\al)
\e
is a graded nonlocal vertex\/ $F$-algebra\/ $(V_*,\cD,\vac,Y)$ with state-to-field correspondence
\begin{equation}
\label{s1eq14}
    Y(a,z)b = (\Phi_{\al,\be})_* \bigl(\cD_\al(z)\bt\id_{E_*(X_\be)}\bigr)\bigl[(a\bt b)\cap C^E_z(\th_{\al,\be})\bigr].
\end{equation}
Similarly, the graded\/ $R_*$-module
\e
	\overline{V}_*=\bigoplus\nolimits_{\al\in\pi_0(X)} E_{*-2\rk\th_{\al,\al}}(X_\al)
\e
becomes a graded vertex\/ $F$-algebra $(\overline{V}_*,\cD,\vac,\overline Y),$ where
\begin{equation}
\label{s1eq15}
 \overline{Y}(a,z)b = (\Phi_{\al,\be})_* \bigl(\cD_\al(z)\bt\id_{E_*(X_\be)}\bigr)\bigl[(a\bt b) \cap \overline{C}^E_z(\th_{\al,\be})\bigr]
\end{equation} 
uses the operation of degree\/ $-4\rk\th_{\al,\be}$ defined by
\[
c\cap \overline{C}^E_z(\th_{\al,\be})=\bigl[c\cap C^E_z(\th_{\al,\be})\bigr]\cap C^E_{\iota(z)}(\si^*(\th_{\be,\al})),\quad c\in E_*(X_\al\t X_\be).
\]
\end{thm}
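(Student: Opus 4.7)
The plan is to verify the axioms of a (graded, nonlocal) vertex $F$-algebra in the sense of Li~\cite{Li} for $V_*$, and additionally the locality axiom for $\overline V_*$, reducing each to a topological identity that follows from the H-space structure, the $B\U(1)$-action, the compatibility conditions \eqref{s1eq7}-\eqref{s1eq12}, and the formal properties of $C_z^E$ from Theorem~\ref{s1thm1}.

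The vacuum and creation axioms come for free. Taking $a=\vac$, relation~\eqref{s1eq11} gives $\th_{\vac,\be}=0$, so Theorem~\ref{s1thm1}(b) trivializes $C_z^E(\th_{\vac,\be})$; combined with the h-unitality $\Phi_{\vac,\be}\simeq\id$ this yields $Y(\vac,z)b=b$. Symmetrically, $Y(a,z)\vac=\cD_\al(z)a$, and the creation axiom follows from $\cD_\al(0)=\id$, read off the definition~\eqref{s3eq3}. The translation covariance follows from \eqref{s1eq9}-\eqref{s1eq10} together with the mixed associativity $\Phi\circ(\Psi\t\Psi)\circ\de\simeq\Psi\circ(\Phi\t\id)$: the twists by $\cL^{\pm}$ in the pullbacks of $\th$ along $\Psi$ are precisely converted, via Theorem~\ref{s1thm1}(c) applied to the line bundle $\cL$, into the required $F$-shifts in the $\cD$-operator.

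The heart of the proof is weak associativity. I would compute $Y(a,z_1)Y(b,z_2)c$ for $c\in E_*(X_\ga)$ using the associativity of $\Phi$: first apply $\Phi_{\be,\ga}$ to $b\bt c$ after capping with $C_{z_2}^E(\th_{\be,\ga})$, then apply $\Phi_{\al,\be+\ga}$ after capping with $C_{z_1}^E(\th_{\al,\be+\ga})$. By~\eqref{s1eq8}, $(\id\t\Phi_{\be,\ga})^*(\th_{\al,\be+\ga})=\pi^*(\th_{\al,\be})+\pi^*(\th_{\al,\ga})$, so Theorem~\ref{s1thm1}(b) splits the corresponding Chern class as $C_{z_1}^E(\th_{\al,\be})\cap C_{z_1}^E(\th_{\al,\ga})$. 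After commuting the shift operators past via translation covariance, the iterated product reduces to a symmetric pushforward of $a\bt b\bt c$ to $X_{\al+\be+\ga}$ capped with a product of Chern classes in $\th_{\al,\be}$, $\th_{\al,\ga}$, $\th_{\be,\ga}$ at appropriate $F$-shifts of $z_1,z_2$. Rewriting $Y(Y(a,z_1-_F z_2)b,z_2)c$ in the same normal form, now using~\eqref{s1eq7}, yields weak associativity.

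For locality of $\overline Y$ on $\overline V_*$, the key input is~\eqref{s1eq12}: since $\si^*(\th_{\be,\al})=\th_{\al,\be}^\vee$, the symmetrized class $\overline C_z^E$ defining $\overline Y$ is invariant under swapping the two factors, up to replacing $z$ by $\iota(z)$. Combined with the naturality Theorem~\ref{s1thm1}(a) applied to $\si$, this produces the $F$-locality of $\overline Y(a,z)\overline Y(b,w)$ and $\overline Y(b,w)\overline Y(a,z)$ in Li's sense. The main obstacle throughout is managing the interplay between $F$-shifts, the dualization map $\iota(z)$ arising from $\th^\vee$, and the Koszul signs from the swap $\si$, while keeping track of the degree shift by $\rk\th_{\al,\al}$ (resp.\ $2\rk\th_{\al,\al}$) that defines the grading on $V_*$ (resp.\ $\overline V_*$).
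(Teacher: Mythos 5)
Your outline tracks the paper's proof in its main lines: vacuum/creation from \eqref{s1eq11} together with $\cD(0)=\id$ and $e$ being an h-fixed point; translation covariance from \eqref{s1eq9}--\eqref{s1eq10} and Theorem~\ref{s1thm1}(c); weak associativity by bringing both iterated products into a common normal form over $X_\al\t X_\be\t X_\ga$ using \eqref{s1eq7}--\eqref{s1eq8}. But the one genuinely delicate point of the argument is elided. When you commute the shift operators past the cap products, the factor involving $\th_{\al,\ga}$ emerges on one side as $i_{w,z}C^E_{F(w,z)}(\th_{\al,\ga})$ and on the other as $i_{z,w}C^E_{F(z,w)}(\th_{\al,\ga})$: since $\th_{\al,\ga}$ is a virtual class, $C^E_{F(z,w)}(\th_{\al,\ga})$ involves negative powers of $F(z,w)$, and these two expansions are \emph{different} Laurent series. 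So the two ``normal forms'' are not equal, and the claim that rewriting both sides in the same normal form ``yields weak associativity'' fails as stated; they agree only after multiplication by $F(z,w)^N$ for $N\gg0$, via \eqref{s2eq3}. This expansion bookkeeping is precisely why the axiom is only \emph{weak} $F$-associativity, and the paper isolates it in the lemma giving \eqref{s5eq1}--\eqref{s5eq3} (note also the asymmetry there: $F(z,w)$ in \eqref{s5eq2} versus $F(z,\io(w))$ in \eqref{s5eq3}, traceable to $\cL$ versus $\cL^\vee$ in \eqref{s1eq9} versus \eqref{s1eq10}; it is this $\io(w)$ that makes the variable collapse to $z$ on the second side). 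Your sketch needs this mechanism spelled out, since without it the concluding step does not go through.

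Two smaller points. First, Definition~\ref{s2dfn5} requires the skew-symmetry axiom \eqref{s2eq11}, not locality \eqref{s2eq12} (the latter is noted in the paper only as a consequence). Your key observation --- that $\si^*\overline{C}^E_z(\th_{\al,\be})=\overline{C}^E_{\io(z)}(\th_{\be,\al})$ --- most directly proves \eqref{s2eq11}, a two-variable identity, via naturality under $\si$, the Koszul sign from $\si_*(b\bt a)=(-1)^{ab}a\bt b$, and the intertwining $\cD_{\al+\be}(z)(\Phi_{\al,\be})_*=(\Phi_{\al,\be})_*(\cD_\al(z)\bt\cD_\be(z))$; proving locality directly would be a second three-variable computation and is neither needed nor easier. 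Second, the mixed associativity $\Phi\circ(\Psi\t\Psi)\circ\de\simeq\Psi\circ(\Phi\t\id_{B\U(1)})$ is not what drives translation covariance (that is \eqref{s1eq9} plus Theorem~\ref{s1thm1}(c) plus $\cD(z)\cD(w)=\cD(F(z,w))$); it is used to establish the intertwining relation just mentioned, which enters in weak associativity and skew-symmetry.
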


\noindent
Here\/ $\io(z)$ is the inverse for\/ $F$ \textup(see~\textup{\S\ref{s2}}\textup). The proof of Theorem~\ref{s1thm2} is given in \S\ref{s5}.\medskip

As a special case, our result applies to the topological realization $X=\M_\A^\top$ of a moduli stack. Taking direct sums in the additive category defines $\Phi$ making $\M_\A$ into an H-space. Moreover, scaling morphism by $\U(1)$ defines an operation $\Psi$ of the quotient stack ${[*\sslash\U(1)]},$ endowing $\M_\A^\top$ with the required action of $B\U(1)={[*\sslash\U(1)]}^\top.$ As shown in Proposition~\ref{s3prop1} below, this action yields an $F$-shift operator $\cD(z).$ The $K$-theory classes $\th_{\al,\be}$ are given by the Ext-complexes in the dg-category $\A,$ which satisfy \eqref{s1eq7}--\eqref{s1eq12}. In geometric examples, one may wish to incorporate signs $\ep_{\al,\be}$ into \eqref{s1eq15}. These are related to orientations, see \cite[\S8.3]{Joy}. The orientation problems were solved in the series \cite{JTU,JoUp1,JoUp2}. For simplicity, we ignore this additional data here and set up a symmetrized construction without signs.


\section{Formal Groups Laws and Vertex F-algebras}
\label{s2}

\begin{nota}
\label{s1nota1}
\hangindent\leftmargini
$\bullet$\hskip\labelsep $R_*=R^{-*}$ a graded commutative ring with unit
\begin{itemize}
\item
$V_*$ a graded module over $R_*$
\item
$z,w$ variables of degree $-2$
\item
$F(z,w)$ a graded formal group law over $R_*$
\item
$V\llbracket z\rrbracket$ the formal power series $\sum_{i=0}^\iy a_i z^i;$ a ring when $V=R$
\item
$V(\!(z)\!)$ the group of Laurent series $\sum_{i=-\iy}^{+\iy} a_i z^i$ with its partially defined product. The fact that $V(\!(z)\!)$ is \emph{not} a ring frequently causes confusion.
\item
The meromorphic series $V\llbracket z\rrbracket[z^{-1}];$ a ring when $V=R.$
\item
$i_{z,w}\colon V\llbracket z,w\rrbracket[z^{-1},w^{-1},F(z,w)^{-1}]\to V(\!(z,w)\!)$ expands $F(z,w)^{-N},$ see Notation~\ref{s2nota1}. We have $i_{z,w}(V\llbracket z,w\rrbracket[F(z,w)^{-1}])\subset V(\!(z)\!)\llbracket w\rrbracket.$
\item
$(-1)^a$ means $(-1)^{{\rm degree}(a)}$
\end{itemize}
\end{nota}

\begin{dfn}
\label{s2dfn1}
A \emph{graded formal group law} over $R_*$ is a formal power series $F(z,w)=\sum_{i,j\geqslant0}F_{ij}z^iw^j \in R\llbracket z,w\rrbracket$ with $F_{ij}\in R_{2i+2j-2}$ satisfying
\ea
\label{s2eq1}
F(z,w) &= F(w,z),
&F(z,0) &= z,
&F(F(z,w),v) &= F(z, F(w,v)).
\ea

There exists a unique power series $\io\in R\llbracket z\rrbracket$ with $F(z,\io(z)) = 0,$ the \emph{inverse}. Note that $\io(\io(z)) = z$ and $\io(F(\io(z),w)) = F(z, \io(w)).$
\end{dfn}


\begin{ex}
\label{s2ex1}
\hangindent\leftmargini
\textup{(i)}
The \emph{additive formal group law} $\mathbb{G}_\mathrm{a}$ over $\Z$ (in degree zero) is defined by $F(z,w) = z + w,$ and the inverse is $\io(z) = -z.$
\begin{enumerate}
\setcounter{enumi}{1}
 \item[(ii)] The \emph{multiplicative formal group law} $\mathbb{G}_\mathrm{m}$ over $\Z$ is defined by $F(z,w) = z + w + zw$ and has $\io(z) = (1+z)^{-1}-1 = -z + z^2 - z^3 + \cdots.$
 \item[(iii)] There is a \emph{universal formal group law} $\mathbb{G}_\mathrm{u}$ over the \emph{Lazard ring} $R_L$ generated by variables $F_{ij}$ subject to the relations contained in \eqref{s2eq1}.
\end{enumerate}
\end{ex}

\begin{nota}
\label{s2nota1}
It follows from \eqref{s2eq1} that for a general formal group law
\begin{align*}
 F(z,w) &= z + w + O(zw),
&\io(z)&= -z + O(z^2).
\end{align*}
Write $F(z,w)=z(1+w/z+wG(z,w))$ and expand using the binomial theorem
\e
\label{s2eq2}
 i_{z,w}F(z,w)^n=\sum_{k=0}^\iy\binom{n}{k} z^{n-k}w^k(1+zG(z,w))^k\in R\llbracket w\rrbracket(\!(z)\!),\quad n\in\Z.
\e
As the $k$-th summand has $w$-degree $\geqslant k,$ this converges as a formal power series. Define $i_{w,z}F(z,w)^n\in R\llbracket z\rrbracket(\!(w)\!)$ by expanding $F(z,w)=w(1+z/w+zG(z,w))$ similarly. We extend $i_{z,w}$ and $i_{w,z}$ to $V\llbracket z,w\rrbracket[z^{-1},w^{-1}][F(z,w)^{-1}]$ by linearity.

Note that $i_{z,w}F(z,w)^n\cdot F(z,w)^n=1$ and $i_{w,z}F(w,z)^n\cdot F(z,w)^n=1.$ For every $P(z,w)=\!\sum_{n\geqslant-N}\!a_n(z,w)F(z,w)^n\in V\llbracket z,w\rrbracket[z^{-1},w^{-1}][F(z,w)^{-1}]$ we thus have
\e
\label{s2eq3}
 F(z,w)^N\bigl(i_{z,w}P(z,w)-i_{w,z}P(z,w)\bigr)=0.
\e
\end{nota}


\begin{dfn}
\label{s2dfn3}
Let $V_*$ be a graded $R_*$-module and $F$ a graded formal group law over $R_*.$ An \emph{$F$-shift operator} is a graded $R$-linear map $\cD(z)\colon V \ra V\llbracket z\rrbracket$ with
\begin{align}
 \label{s2eq4}
 \cD(0)&=\id_V,
 &\cD(z) \circ \cD(w) &= \cD(F(z,w)).
\end{align}
\end{dfn}


\begin{ex}
\label{s2ex2}
Let $R=\Q,$ ${V = \Q[w]}.$ Then $\cD(z)(f(w)) = e^{z \frac{d}{dw}}f(w)$
defines a $\mathbb{G}_\mathrm{a}$-shift operator. The relation $\cD(z)(f(w)) = f(z+w)$ motivates the terminology.
\end{ex}

We now define vertex $F$-algebras. For $F=\mathbb{G}_\mathrm{a}$ we recover ordinary vertex algebras, see Frenkel--Ben-Zvi \cite{FrBZ}, Frenkel--Lepowsky--Meurman \cite{FLM}, and Kac \cite{Kac}. 


\begin{dfn}
\label{s2dfn5}
Let $F(z,w)$ be a graded formal group law over $R_*.$ A \emph{graded nonlocal vertex $F$-algebra} is a graded $R_*$-module $V_*,$ a {\it vacuum vector} $\vac\in V_0,$ an $F$-shift operator $\mathcal{D}(z),$ and a graded $R$-linear \emph{state-to-field correspondence}
\ea
\label{s2eq5}
 V\ot_R V &\longra V\llbracket z\rrbracket[z^{-1}], &a\ot b\longmapsto Y(a,z)b,
\ea
satisfying the following axioms:
\begin{enumerate}
	\item \emph{Vacuum and creation:} $Y(a,z)\vac$ is holomorphic for all $a\in V$ and
	\ea
	\label{s2eq6}
	 \left.Y(a,z)\vac\right|_{z=0} &= a,\\
	\label{s2eq7}
          Y(\vac,z) &= \id_V.
	\ea
	\item {\it $F$-translation covariance:} for all $a \in V$ we have
	\ea
	\label{s2eq8}
	 Y(\cD(w)(a),z) &= i_{z,w} Y(a, F(z,w)),\\
	\label{s2eq9}
	 \cD(z)\vac &= \vac.
	\ea
	\item \emph{Weak $F$-associativity:} for all $a,b,c \in V$ there exists $N \geqslant 0$ with
        \e
        \label{s2eq10}
         F(z,w)^N Y(Y(a,z)b,w)c = F(z,w)^N i_{z,w} Y(a, F(z,w)) Y(b,w) c.
        \e
\end{enumerate}
A graded nonlocal vertex $F$-algebra is a {\it graded vertex $F$-algebra} if, in addition,
\ea
 \label{s2eq11}
  Y(a,z)b &= (-1)^{ab} \cD(z) \circ Y(b, \io(z))a,
  &&\text{for all $a,b \in V$.}
\ea
\end{dfn}

\begin{rem}
It is a consequence of \eqref{s2eq6}--\eqref{s2eq11} that for all $a,b,c\in V$ there exists $N\geqslant0$ with
\e
\label{s2eq12}
 (z-w)^N Y(a,z)Y(b,w)c=(-1)^{ab}(z-w)^NY(b,w)Y(a,z)c.
\e
So our definitions agree with those given by Li~\cite{Li} in the ungraded case.
\end{rem}

\section{Complex Oriented Cohomology and Chern Classes}
\label{s3}

Let $E^*$ be a generalized cohomology theory, see for example Rudyak \cite[Ch.~II, \S3]{Rud}. Thus, for every pair $A\subset X$ of topological spaces there is defined a graded abelian group $E^*(X,A).$ Continuous maps $f\colon(X,A)\to(X',A')$ induce homomorphisms $f^*\colon E^*(X',A')\to E^*(X,A)$ that depend only on the homotopy class of $f.$ For a pointed space $x_0\in X$ write $\widetilde{E}^*(X)=E^*(X,\{x_0\})$ for \emph{reduced cohomology}. The \emph{smash product} of $(X,x_0)$ and $(Y,y_0)$ is the quotient $X\wedge Y=(X\t Y)/(X\vee Y)$ with one-point union $X\vee Y=(X\t\{y_0\})\cup(\{x_0\}\t Y)$ collapsed to become the new base-point. As part of the structure, $E^*$ comes equipped with natural \emph{suspension isomorphisms} $\si_X\colon \widetilde{E}^*(X)\to\widetilde{E}^{*+1}(X\wedge\cS^1).$

Suppose $E^*$ is a multiplicative generalized cohomology theory. Then there is a bilinear \emph{cross product} $\boxtimes\colon E^*(X,A)\ot E^*(Y,B)\to E^*(X\t Y,X\t B\cup A\t Y)$ and \emph{units} $1_X\in E^0(X),$ both natural. Pulling the cross product back along the diagonal makes $E^*(X)$ a graded commutative unital $R^*$-algebra for the cup product `$\cup$' over the \emph{coefficient ring} $R^*=E^*(\pt).$ Dually, there is a homological cross product that in particular makes $E_*(X)$ a graded module over $R_*=R^{-*}.$ There is a \emph{cap product}
\[
 E_a(X)\ot_R E^b(X)\longra E_{a-b}(X),\qquad a\ot\varphi\mapsto a\cap\varphi
\]
which is $R$-linear, unital $a\cap 1=a,$ and natural $f_*(a\cap f^*(\varphi'))=f_*(a)\cap\varphi',$ where $f\colon X\to X'$ and $\varphi'\in E^b(X').$ See Rudyak~\cite{Rud} for further properties.

\begin{dfn}
\label{s3dfn1}
The suspension isomorphism shows that $\widetilde{E}^*(\CP^1)\cong\widetilde{E}^*(\cS^2)\cong R^{*-2}$ is a free $R$-module on a single generator. A multiplicative cohomology theory $E^*$ is \emph{complex orientable} if $i^*\colon\widetilde{E}^*(\CP^\infty)\to\widetilde{E}^*(\CP^1)$ is surjective, where $\CP^\iy\!\cong\operatorname{colim}_m\CP^m$ and $i\colon\CP^1\hookrightarrow\CP^\iy.$
A \emph{complex orientation} is a choice of $\xi_E\in \widetilde{E}^2(\CP^\infty)$ such that $i^*(\xi_E)$ generates the $R$-module $\widetilde{E}^*(\CP^1).$
\end{dfn}

The presence of the permanent cycle $\xi_E\vert_{\CP^m}$ implies that the Atiyah--Hirzebruch spectral sequence $H^p(\CP^m;E^q(\pt))\Longrightarrow E^{p+q}(\CP^m)$ collapses, see Adams~\cite[p.~42]{Adams}. Hence we have canonical isomorphisms
\begin{align*}
E^*(\CP^m)&\cong R[\xi_E]/(\xi_E^{m+1}),
&E^*(\CP^\iy)&\cong\lim E^*(\CP^m)\cong R\llbracket \xi_E\rrbracket.
\end{align*}
More generally, let $P\to X$ be a bundle of projective spaces $\CP^m$ and suppose that $w\in E^*(P)$ restricts on every fiber $P_x$ to generators $1_{P_x}, w|_{P_x}, \ldots, w^m|_{P_x}$ of the $R$-module $E^*(P_x).$ Then Dold's theorem implies that $E^*(P)$ is a free $E^*(X)$-module on $1_P,w,\ldots, w^m,$ see \cite[(7.4)]{CoFl}. In particular,
\ea
\label{s3eq1}
 E^*(X\t\CP^\iy)&\cong E^*(X)\llbracket \xi_E\rrbracket,
&E^*({\CP^\iy\!\t\CP^\iy})&\cong R\llbracket \pi_1^*(\xi_E),\pi_2^*(\xi_E)\rrbracket.
\ea

\begin{dfn}
\label{s3dfn2}
 Let $\xi_E$ be a complex orientation of $E^*.$ Write $\cL\to \CP^\iy$ for the universal complex line bundle with $\cL|_L=L.$ Recall that $\CP^\iy=B\U(1)$ is an H-space with operation a classifying map ${\mu_{\CP^\iy}\colon{\CP^\iy\!\t\CP^\iy}\to\CP^\iy}$ of the tensor product ${\pi_1^*(\cL)\ot\pi_2^*(\cL)}$ and unit $t_0$ the trivial line bundle. The associated \emph{formal group law} $F=\sum_{i,j\geqslant0}F_{ij}z^iw^j$ is defined by the expansion
 \e
 \label{s3eq2}
 \mu_{\CP^\iy}^*(\xi_E)=\sum\nolimits_{i,j\geqslant0} F_{ij}\, \xi_E^i\bt\xi_E^j,\qquad F_{ij}\in R^{2-2i-2j}=R_{2i+2j-2}.
 \e
\end{dfn}

As in \cite[p.~42]{Adams} the homology $E_*(\CP^\iy)$ is the free $R$-module on the dual generators $t_n,$ $n\geqslant0,$ of degree $2n$ characterized by $\an{t_n,\xi_E^m}=\de_n^m.$ 

\begin{prop}
\label{s3prop1}
Let\/ $(E^*,\xi_E)$ be a complex oriented cohomology theory and associated formal group law\/ $F(z,w).$ Suppose\/ $\Psi\colon B\U(1)\t X\to X$ satisfies the axioms for a group action of the H-space\/ $B\U(1)$ on\/ $X$ up to homotopy. Then
\ea
\label{s3eq3}
 \cD(z)(a)&=\sum\nolimits_{k\geqslant 0} \Psi_*(t_k\bt a)\, z^k,
 &&a\in E_*(X),
\ea
defines an\/ $F$-shift operator on\/ $E_*(X).$
\end{prop}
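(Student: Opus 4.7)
The plan is to verify directly the two axioms of Definition~\ref{s2dfn3}.

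For the normalization $\cD(0) = \id_{E_*(X)}$, evaluating at $z = 0$ selects only the $k = 0$ term, giving $\cD(0)(a) = \Psi_*(t_0 \bt a)$. The defining property $\langle t_0, \xi_E^m\rangle = \delta_0^m$ identifies $t_0 \in E_0(B\U(1))$ with the pushforward of $1 \in E_0(\pt)$ along the inclusion of the H-space unit $e_{B\U(1)}$ (valid because $B\U(1)$ is path-connected, so any basepoint gives the same class in $E_0$). The H-space axiom $\Psi(e_{B\U(1)}, -) \simeq \id_X$ then yields $\Psi_*(t_0 \bt a) = a$.

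The main content is the composition law $\cD(z) \circ \cD(w) = \cD(F(z,w))$. I would first expand
\[
\cD(z)\bigl(\cD(w)(a)\bigr) = \sum_{k,l \geqslant 0} \Psi_*\bigl(t_k \bt \Psi_*(t_l \bt a)\bigr)\, z^k w^l,
\]
and then use the H-space associativity $\Psi \circ (\id_{B\U(1)} \t \Psi) \simeq \Psi \circ (\mu_{B\U(1)} \t \id_X)$ to rewrite each summand as $\Psi_*\bigl((\mu_{B\U(1)})_*(t_k \bt t_l) \bt a\bigr)$. The key computation is then the power series identity
\[
\sum_{k,l \geqslant 0} (\mu_{B\U(1)})_*(t_k \bt t_l)\, z^k w^l = \sum_{m \geqslant 0} t_m\, F(z,w)^m
\]
in $E_*(B\U(1))\llbracket z,w\rrbracket$. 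To establish this, I would pair both sides with $\xi_E^m$: since $\mu_{B\U(1)}^*$ is a ring homomorphism and $\mu_{B\U(1)}^*(\xi_E) = F(\pi_1^*\xi_E, \pi_2^*\xi_E)$ by \eqref{s3eq2}, we have $\mu_{B\U(1)}^*(\xi_E^m) = F(\pi_1^*\xi_E, \pi_2^*\xi_E)^m$. The Künneth duality from \eqref{s3eq1}, namely $\langle t_k \bt t_l, \xi_E^i \bt \xi_E^j\rangle = \delta^i_k \delta^j_l$ (with no sign since all degrees involved are even), then collects the coefficients on the left-hand side to match exactly $F(z,w)^m$, which is the pairing of the right-hand side with $\xi_E^m$.

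Substituting the identity back yields
\[
\cD(z)\bigl(\cD(w)(a)\bigr) = \sum_{m \geqslant 0} \Psi_*(t_m \bt a)\, F(z,w)^m = \cD(F(z,w))(a),
\]
completing the verification. The only subtlety is well-definedness of the substitution $Z \mapsto F(z,w)$ in $\cD(Z)(a)$: since $F(z,w) = z + w + O(zw)$ has no constant term (Notation~\ref{s2nota1}), $F(z,w)^m$ begins in total degree $m$, so for each $(k,l)$ only the finitely many terms with $m \leqslant k + l$ contribute to the coefficient of $z^k w^l$, giving a bona fide element of $E_*(X)\llbracket z,w\rrbracket$. The main potential obstacle is the power series identity relating iterated pushforwards under $\mu_{B\U(1)}$ to powers of the formal group law, but once dualized to cohomology this becomes an immediate consequence of definition~\eqref{s3eq2}.
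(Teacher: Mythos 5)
Your proposal is correct and takes essentially the same route as the paper: both reduce, via the H-space associativity of $\Psi$, to the identity $(\mu_{B\U(1)})_*(t_i\bt t_j)=\sum_{n}F_{ij}^n\,t_n$ where $F(z,w)^n=\sum_{i,j}F_{ij}^n z^i w^j$, which is exactly your generating-function identity $\sum_{k,l}(\mu_{B\U(1)})_*(t_k\bt t_l)z^kw^l=\sum_m t_m F(z,w)^m$. The only difference is that the paper asserts this identity without comment, whereas you justify it by pairing against $\xi_E^m$ and also spell out why $t_0$ is the pushforward of $1\in E_0(\pt)$ --- both worthwhile details, but not a different argument.
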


\begin{proof}
Since $\Psi(t_0,x)=x$ is neutral, $\cD(0)=\id_{E_*(X)}.$ Define coefficients $F_{ij}^n$ by $F(z,w)^n=\sum_{i,j\geqslant0} F_{ij}^n\,z^iw^j.$ Then $(\mu_{\CP^\iy})_*(t_i\bt t_j)=\sum_{n\geqslant 0} F_{ij}^n\,t_n,$ and so
\begin{align*}
 \cD(z)\circ\cD(w)(a)&=\sum\nolimits_{\,\,\,\quad\mathclap{i,j\geqslant0}}\;\quad\Psi_*\bigl(t_i\bt \Psi_*(t_j\bt a)\bigr)z^iw^j\\
 &=\sum\nolimits_{\,\,\,\quad\mathclap{i,j\geqslant0}}\;\quad\Psi_*\bigl((\mu_{\CP^\iy})_*(t_i\bt t_j)\bt a\bigr)z^iw^j\\
 &=\sum\nolimits_{\,\,\,\quad\mathclap{i,j,n\geqslant0}}\;\quad\Psi_*(t_n\bt a) F_{ij}^n\,z^iw^j=\cD(F(z,w)).\qedhere
\end{align*}
\end{proof}

\begin{dfn}
\label{s3dfn3}
 Let $V\to X$ be a complex vector bundle of rank $n$ with zero section $0_X.$ The  bundle of projective spaces $\P(V)=(V\setminus0_X)/\C^*$ carries a tautological line bundle $\cL_V \to \P(V)$ with $\cL_V\vert_L=L.$ Its classifying map $f_{\cL_V}\colon \P(V)\to\CP^\iy$ is unique up to homotopy. Define $w=f_{\cL_V}^*(\xi_E)$ using the complex orientation. By the above, $E^*(\P(V))$ is a free $E^*(X)$-module with basis $1_{\P(V)},w,\ldots,w^{n-1}.$ The \emph{Conner--Floyd Chern classes} are defined by expanding $w^n$ in this basis:
\ea
\label{s3eq4}
c_0^E(V)&=1,
&0&=\sum\nolimits_{i=0}^n (-1)^ic_i^E(V)\cdot w^{n-i},
&c_i^E(V)&=0\enskip(\forall i>n)
\ea
Naturality under pullback is obvious. There is a Whitney sum formula \cite[p.~47]{CoFl}
\e
\label{s3eq5}
c_k^E(V\op W)=\sum\nolimits_{i=0}^k c_i^E(V)c_{k-i}^E(W).
\e
For complex line bundles $\cL_L\to \P(L)$ is isomorphic to $L\to X$ so $c_1^E(L)=f_L^*(\xi_E)$ for the classifying map $f_L$ of $L.$ In particular, 
\e
\label{s3eq6}
c_1^E(L_1\ot L_2)=F(c_1^E(L_1),c_1^E(L_2)).
\e
Moreover, $c_1^E(\underline{\C})=0$ as $\xi_E$ is reduced. Hence $c_i^E(\underline{\C}^N)=0$ for every trivial bundle.
\end{dfn}

\begin{ex}
\label{s3ex1}
 Ordinary cohomology ${E^*=H^*}$ has a complex orientation $\xi_H$ in $H^2(\CP^\iy)=\lim H^2(\CP^m)$ that is Poincar\'e dual to the fundamental class $[\CP^{m-1}]\in H_{m-2}(\CP^m)$ with orientation of $\CP^{m-1}$ fixed by the complex structure. We obtain the ordinary Chern classes, and $c_1(L_1\ot L_2)=c_1(L_1)+c_1(L_2)$ implies $F_H=\mathbb{G}_\mathrm{a}.$
\end{ex}

\begin{ex}
\label{s3ex2}
 Topological $K$-theory ${E^*=K^*}$ on compact spaces is the group completion of isomorphism classes of complex vector bundles. Write $\cL_m=\cL|_{\CP^m}$ for the tautological complex line bundle over $\CP^m,$ $\C$ for the trivial bundle, and $[\cL_m], 1\in K^0(\CP^m)$ for their classes in $K$-theory. The classes ${[\cL_m]-1}\in\widetilde{K}^0(\CP^m)$ are compatible under restriction and define a complex orientation $\xi_K\in\widetilde{K}^2(\CP^\iy)=\lim\widetilde{K}^0(\CP^m).$ Here $F_K=\mathbb{G}_\mathrm{m}$ is the multiplicative formal group law, as
 \[
  \mu^*({[\cL]-1})={[\mu^*(\cL)]-1}={[\pi_1^*(\cL)\ot\pi_2^*(\cL)]-1}=\mathbb{G}_\mathrm{m}\bigl({[\cL]-1},{[\cL]-1}\bigr).
 \]
 
 For a complex vector bundle $V\to X$ of rank $n$ one has $\pi^*(V)=\cL_V\op\cL_V^\bot$ over the projectivization $\pi\colon\P(V)\to X$ and $\cL_V^\bot.$ The formal power series $\La_t([V])=1+[V]t+[\La^2 V]t^2+\ldots\in K^0(X)\llbracket t\rrbracket$ has inverse $\La_{-t}([V]),$ so $\La_t({[V]-[W]})=\La_t([V])\La_{-t}([W]).$ As $[\cL_V^\bot]={\pi^*[V]-[\cL]}$ has rank $n-1,$ the $n$-th coefficient of $\La_t([\cL_V^\bot])=\La_t([\pi^*(V)])\La_{-t}(\cL)$ is $0=[\La^n(\cL_V^\bot)]=\sum_{p=0}^n (-1)^{n-p}[\La^p(V)]\cdot[\cL]^{n-p}.$ Putting $[\cL]=w+1$ and comparing to \eqref{s3eq4}, $c_i^K(V)=\sum\nolimits_{p=0}^i (-1)^{i+p}\binom{n-p}{n-i}[\La^p(V)].$
\end{ex}

\begin{ex}
\label{s3ex3}
 As in Quillen~\cite{Q}, complex cobordism $\Om^n_\U(X)$ for $X$ a smooth manifold is the set of smooth maps ${f\colon Z\to X}$ of codimension $\dim X-\dim Z=n$ with a complex structure the stable normal bundle, modulo cobordism. The complex orientation $\xi_\Om\in\Om^2_\U(\CP^\iy)=\lim \Om^2_\U(\CP^m)$ is given by $\CP^{m-1}\hookrightarrow\CP^m,$ and $\CP^{m-1}$ is the zero set of a section of $\cL_m^*.$ So for complex line bundles $c_1^{\Om_\U}(L)$ is represented by the zero set $s^{-1}(0)$ of a generic section $s\colon X\to L.$ The formal group law is the universal law $\mathbb{G}_\mathrm{u},$ see Adams~\cite[Part~I,~\S8]{Adams}.
\end{ex}

\begin{lem}
\label{s3lem1}
 Let\/ $V\to X$ be a complex vector bundle over a finite CW complex. Then each of the Conner--Floyd Chern classes\/ $c_i^E(V)$ is nilpotent.
\end{lem}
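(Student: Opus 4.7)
The plan is to reduce to the case of complex line bundles via the splitting principle and then deduce nilpotency of first Chern classes of line bundles from the finiteness of $X$. These are two independent steps that combine cleanly at the end.

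\emph{Splitting principle.} I would iterate the projective bundle construction to produce a finite CW flag bundle $\pi\colon Y\to X$ such that $\pi^*(V) \cong L_1 \oplus \cdots \oplus L_n$ splits as a sum of line bundles. Setting $Y_0=X$, $V_0=V$, $Y_{k+1}=\P(V_k)$, $V_{k+1}=\cL_{V_k}^\bot$ (of rank $n-k-1$), and terminating at $Y=Y_n$, each stage is a fiber bundle over a finite CW complex with compact fiber, so $Y$ remains a finite CW complex. Dold's theorem, invoked just before \eqref{s3eq1} in the excerpt, asserts that $E^*(\P(V_k))$ is a free $E^*(Y_k)$-module with basis $1,w,\ldots,w^{n-k-1}$, so each $\pi_k^*$ is injective; composing yields an injection $\pi^*\colon E^*(X)\hookrightarrow E^*(Y)$.

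\emph{Nilpotency for line bundles.} For each summand $L_j$ we have $c_1^E(L_j) = f_{L_j}^*(\xi_E)$ for the classifying map $f_{L_j}\colon Y\to\CP^\iy=\colim_m\CP^m$. Since $Y$ is a finite CW complex, cellular approximation factors $f_{L_j}$ (up to homotopy) through some $\CP^m$. In the ring $E^*(\CP^m)\cong R[\xi_E]/(\xi_E^{m+1})$ the class $\xi_E^{m+1}$ vanishes, so pulling back gives $c_1^E(L_j)^{m+1}=0$ in $E^*(Y)$.

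\emph{Conclusion.} Applying the Whitney sum formula \eqref{s3eq5} inductively expresses $c_k^E(\pi^*V)$ as the $k$-th elementary symmetric polynomial in the commuting nilpotents $c_1^E(L_1),\ldots,c_1^E(L_n)$; hence $c_k^E(\pi^*V)$ is itself nilpotent in $E^*(Y)$. Naturality gives $\pi^*(c_k^E(V)) = c_k^E(\pi^*V)$, so raising to a sufficiently large power and invoking the injectivity of $\pi^*$ forces $c_k^E(V)^N = 0$ in $E^*(X)$. I anticipate no serious obstacle, as both the splitting principle and the nilpotence of $c_1^E(L)$ follow directly from the preparatory material in \S\ref{s3}; the only routine bookkeeping is confirming that iterated Whitney sums turn higher Chern classes into polynomials in first Chern classes of the splitting, which is a straightforward induction.
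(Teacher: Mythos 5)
Your proof is correct, but it takes a genuinely different route from the paper's. You reduce to line bundles via the splitting principle, observe that compactness of the (iterated) flag bundle forces each classifying map to factor through a finite skeleton $\CP^m$, where $\xi_E^{m+1}=0$ in $E^*(\CP^m)\cong R[\xi_E]/(\xi_E^{m+1})$, and then propagate nilpotence through the elementary symmetric polynomials and the injection $\pi^*$. The paper instead uses a cup-length argument: it chooses a finite open cover $X=\bigcup_{\la=1}^N U_\la$ by contractible sets trivializing $V$, lifts $c_i^E(V)$ to relative classes $x_\la\in E^{2i}(X,U_\la)$ via the long exact sequence of the pair (possible since $c_i^E(V)|_{U_\la}=0$ for $i\geqslant1$), and multiplies them into $E^{2iN}(X,X)=0$, giving $c_i^E(V)^N=0$ directly. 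The paper's argument is shorter, avoids constructing the splitting tower, and yields a uniform exponent $N$ (the size of the cover) for all $i$ at once; it really only uses that the classes vanish on each set of a finite cover. Your argument is heavier on preliminaries (compactness of the flag bundle, injectivity of $\pi^*$ from Dold's theorem, the Whitney formula \eqref{s3eq5}) but all of these are already set up in \S\ref{s3}--\S\ref{s4}, and it gives the more structural information that the Chern classes are symmetric polynomials in nilpotent Chern roots, with an explicit exponent governed by the dimension of the flag bundle. Two small points to tidy: the statement should be read as $i\geqslant1$ (both proofs silently assume this, since $c_0^E(V)=1$), and for your second step all you really need is that $Y$ is compact, so the image of $f_{L_j}$ lies in a finite subcomplex of $\CP^\iy$ --- you need not insist on a finite CW structure on the flag bundle.
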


\begin{proof}
There is a finite open cover $X=\bigcup_{\la=1}^N U_\la$ with $U_\la$ contractible and $V|_{U_\la}$ trivial. From the long exact sequence of the pair $(X,U_\la)$ we see that we may lift $c_i^E(V)$ along $j_\la^*\colon E^{2i}(X,U_\la)\to E^{2i}(X)$ to a class $x_\la\in E^{2i}(X,U_\la).$ The diagram
\[
\hskip-5cm\begin{tikzcd}
 \prod_{\la=1}^N E^{2i}(X,U_\la)\arrow[d,"\prod_{\la=1}^N j_\la^*"]\arrow[r,"\cup"] & |[text width=37.59pt, align=center]|E^{2iN}\bigl(X,\bigcup_{\la=1}^N U_\la\bigr)=E^{2iN}(X,X)=\{0\}\arrow[d,"j^*"]\\
 \prod_{\la=1}^N E^{2i}(X)\arrow[r,"\cup"] & E^{2iN}(X)
\end{tikzcd}
\]
commutes by naturality of `$\cup$', so $
c_i^E(V)^N=\prod_{\la=1}^N j_\la^*(x_\la)=j^*\bigl(\prod_{\la=1}^N x_\la\bigr)=0.$
\end{proof}

\begin{nota}
\label{s3nota1}
When $X$ is a finite CW complex, it follows that we may substitute $w$ by $c_1^E(L)$ in the formal group law $F(z,w).$ To define the right hand side of \eqref{s1eq5} also for infinite CW complexes $X,$ let $\{X_i \mid i\in I\}$ be the direct system of finite subcomplexes $X_i\subset X$ ordered by inclusion.
The \emph{pro-group $E$-cohomology} is the inverse limit $\hat{E}^*(X)=\lim E^*(X_i).$ The family of all restrictions $F(z,c_1^E(L|_{X_i}))$ determines an element we write $F(z,c_1^E(L))\in\hat{E}^*(X)\llbracket z\rrbracket.$ As homology and direct limits commute, see~\cite[Prop.~7.53]{Swi}, we have $E_*(X)=\colim E_*(X_i)$ and therefore a well-defined cap product $E_*(X)\ot \hat{E}^*(X)\to E_*(X).$ This defines \eqref{s1eq5} in general.
\end{nota}

\section{\texorpdfstring{Proof of Theorem~\ref{s1thm1}}{Proof of Theorem 1.1}}
\label{s4}

\subsection*{Step 1: Vector bundles over finite CW complexes}

For a complex line bundle $L\to X$ over a finite CW complex $X$ define $C_z^E(L)=F(z,c_1^E(L)).$ For $V\to X$ a rank $n$ complex vector bundle we proceed by the splitting principle. As in Definition~\ref{s3dfn3} over the projectivization $p\colon\mathbb{P}(V)\to X$ we can split off a line bundle from $p^*(V)$ and $p^*\colon E^*(Y)\to E^*(X)$ is injective. Iterating, we find $q\colon Y\to X$ and line bundles $L_1, \ldots, L_n \to Y$ with $L_1\op\cdots\op L_n = q^*(V)$ and $q^*\colon E^*(Y)\to E^*(X)$ is injective. By \eqref{s3eq5}, the class $q^*(c_k^E(V))$ is the $k$-th elementary symmetric polynomial in the \emph{Chern roots} $c_1^E(L_1), \ldots, c_1^E(L_1).$ As the expression
\e
\label{s4eq1}
F(z,c_1^E(L_1))\cup\cdots\cup F(z,c_1^E(L_n)) = q^*(C_z^E(V))
\e
is a symmetric polynomial in the Chern roots, the fundamental theorem of symmetric polynomials implies it has a (unique) preimage $C_z^E(V)$ in $E^*(X)\llbracket z\rrbracket.$ The map \eqref{s1eq2} is obtained by combining the class $C_z^E(V)$ with the cap product
\[
\cap\colon E_*(X)\ot E^*(X)\llbracket z\rrbracket\to E_*(X)\llbracket z\rrbracket.
\]

\noindent
(a)\hskip\labelsep For naturality, let $f\colon X'\to X$ and use the pullback $Q\colon Y'=X'\t_X Y\to X'$ with its canonical map $F\colon Y'\to Y$ to split $V'=f^*(V)$ as $Q^*(V')\cong F^*q^*(V)\cong F^*(L_1)\op\cdots\op F^*(L_n).$ Naturality of the Conner--Floyd Chern classes implies that the pullback ${F^*q^*(C_z^E(V))}={Q^*f^*(C_z^E(V))}$ of \eqref{s4eq1} along $F$ is ${Q^*C_z^E(V')}.$ Thus,
\e
 \label{s4eq2}
 C_z^E(f^*(V))=f^*(C_z^E(V)).
\e

\noindent
(b)\hskip\labelsep Let $V,\, W\to X$ be vector bundles. Pick $q\colon Y\to X$ such that both $q^*(V)=L_1\op\cdots\op L_n$ and $q^*(W)=S_1\op\cdots\op S_m$ split into line bundles with $q^*$ injective. Then $q^*C_z^E(V)$ equals \eqref{s4eq1}, $q^*C_z^E(W)=F(z,c_1^E(S_1))\cup\cdots\cup F(z,c_1^E(S_m)),$ and
\[
q^*C_z^E(V\op W)=F(z,c_1^E(L_1))\cup\cdots\cup F(z,c_1^E(S_m))=q^*C_z^E(V)\cup q^*C_z^E(W).
\]
Hence
\e
 \label{s4eq3}
 C_z^E(V\op W)=C_z^E(V)\cup C_z^E(W).
\e
This proves that cap product with $C_z^E(V)$ satisfies Theorem~\ref{s1thm1}(a)\&(b). Part (c) holds by construction. For (d), in the case of line bundles the operation ${(-)\cap F(z,c_1^E(L))}={\sum_{i,j\geqslant0}F_{ij} z^i[(-)\cap c_1^E(L)^j]}$ has degree $-2,$ as $F_{ij}\in R_{2i+2j-2}.$ It then follows from \eqref{s4eq1} that in general $(-)\cap C_z^E(V)$ has degree $-2\rk(V).$
\vskip\topsep

\noindent
(e)\hskip\labelsep Let $V\to X$ be a vector bundle, $L\to X$ a complex line bundle, and suppose $q^*(V)$ splits as above. Then $q^*(L\ot V)=(q^*(L)\ot L_1)\op\cdots\op(q^*(L)\ot L_n)$ and so
\begin{align*}
 q^*C_z^E(L\ot V) &\;=\; F\bigl(z,c_1^E(q^*(L)\ot L_1)\bigr)\cup\cdots\cup F\bigl(z,c_1^E(q^*(L)\ot L_n)\bigr)\\
 &\;\overset{\mathclap{\eqref{s3eq6}}}{=}\;F\bigl(z,F(q^*c_1^E(L),c_1^E(L_1))\bigr)\cup\cdots\cup F\bigl(z,F(q^*c_1^E(L),c_1^E(L_n))\bigr)\\
 &\;\overset{\mathclap{\eqref{s2eq1}}}{=}\;F\bigl(F(z,q^*c_1^E(L)),c_1^E(L_1)\bigr)\cup\cdots\cup F\bigl(F(z,q^*c_1^E(L)),c_1^E(L_n)\bigr)\\
 &\;=\;q^*C_{F(z,c_1^E(L))}^E(V).
\end{align*}
Hence
\e
 \label{s4eq4}
 C_z^E(L\ot V)=C_{F(z,c_1^E(L))}^E(V).
\e

\subsection*{Step 2: Extension to K-theory of finite CW complexes}

So far, we have constructed a homomorphism $C_z^E\colon{(\operatorname{Vect}(X),\op)}\to{(E^*(X)\llbracket z\rrbracket,\cup)}$ on the monoid of complex vector bundles ${V\to X}$ up to isomorphism over a finite CW complex. We claim that every class ${C_z^E(V)}$ is invertible in the larger ring ${E^*(X)\llbracket z\rrbracket[z^{-1}]}.$ Indeed, there exists a vector bundle ${W\to X}$ with ${V\op W}\cong\underline{\C}^N$ trivial and therefore ${C_z^E(V)\cup C_z^E(W)}={C_z^E(\underline{\C}^N)}={F(z,c_1^E(\underline{\C}))^N}=z^N.$ As $X$ is a finite CW complex, its topological $K$-theory is the group completion of ${(\operatorname{Vect}(X),\op)}$ whose universal property allows us to uniquely extend the homomorphism to $C_z^E\colon K^0(X)\to{(E^*(X)\llbracket z\rrbracket[z^{-1}],\cup)}.$ It is easy to check that properties (a)--(d) continue to hold.

\begin{nota}
\label{s4nota1}
As $X$ is a finite CW complex, we may write ${\th=[V]-[\underline{\C}^\ell]}.$ Expand $C_z^E(V)=\sum_{n\geqslant0}^\iy C_n(V)z^n.$ Then
\e
\label{s4eq5}
 C_z^E(\th)=\sum\nolimits_{n\geqslant0} C_n(V)z^{n-\ell}.
\e
In Notation~\ref{s1nota1} we have defined ${i_{z,w}\bigl(F(z,w)^{-\ell}\sum_{n\geqslant -\ell}^\iy C_n(V)F(z,w)^n\bigr)}$ as a holomorphic series in $w$ which we can substitute by the nilpotent $c_1^E(L),$ see Lemma~\ref{s3lem1}. This defines ${i_{z,c_1^E(L)}C_{F(z,c_1^E(L))}^E(\th)}\in E^*(X)\llbracket z\rrbracket[z^{-1}]$ for finite $X.$ When $X$ is infinite, the classes for the restrictions of $\th$ to all finite subcomplexes $X_i\subset X$ define $i_{z,c_1^E(L)}C_{F(z,c_1^E(L))}^E(\th)\in \hat{E}(X)(\!(z)\!)$ in pro-group $E$-cohomology, see Notation~\ref{s3nota1}.
\end{nota}

We prove (e). As just seen, $C_z^E(L)=F(z,c_1^E(L))$ is invertible in $E^*(X)\llbracket z\rrbracket[z^{-1}].$ Therefore $i_{z,w}F(z,c_1^E(L))^n=F(z,c_1^E(L))^n$ for all $n\in\Z.$ Using Notation~\ref{s4nota1},
\begin{align*}
 C_z^E(L\ot\th)&\overset{\mathclap{\eqref{s4eq3}}}{\;=\;}C_z(L\ot V) C_z(L)^{-\ell}\\
 &\overset{\mathclap{\eqref{s4eq4}}}{\;=\;} C_{F(z,c_1^E(L))}(V) F(z,c_1^E(L))^{-\ell}\\
 &\;= \sum\nolimits_{n\geqslant0} C_n(V)F(z,c_1^E(L))^{n-\ell} = i_{z,c_1^E(L)}C_{F(z,c_1^E(L))}^E(\th).
\end{align*}

\subsection*{Step 3: Infinite complexes}

Let $\{X_i \mid i\in I\}$ be the direct system of finite subcomplexes of a CW complex $X$ ordered by inclusion. Write $\io(i)\colon X_i\subset X$ and $\io(i,j)\colon X_i\subset X_j$ for the inclusions. For $\th\in K^0(X),$ Step 2 yields maps
\begin{equation}
\label{s4eq6}
\begin{tikzcd}[column sep=huge]
 E_*(X_i)\arrow[r,"\cap\, C_z(\io(i)^*\th)"] & E_*(X_i)\llbracket z\rrbracket[z^{-1}]\arrow[r,"{\io(i)_*}"] & E_*(X)\llbracket z\rrbracket[z^{-1}].
\end{tikzcd}
\end{equation}
By naturality, $\io(i,j)_*(a)\cap C_z^E(\io(j)^*\th)=\io(i,j)_*(a\cap C_z^E(\io(i)^*\th))$ so the maps \eqref{s4eq6} determine a homomorphism $E_*(X)\cong\colim E_*(X_i)\to E_*(X)\llbracket z\rrbracket[z^{-1}]$ on the colimit, using that homology and direct limits commute, see~\cite[Prop.~7.53]{Swi}. Equivalently, the restrictions $C_z^E(\th|_{X_i})$ define a class $C_z^E(\th)\in\hat{E}^*(X)(\!(z)\!)$ in pro-group $E$-cohomology. Using the cap product $E_*(X)\ot \hat{E}^*(X)(\!(z)\!)\to E_*(X)(\!(z)\!)$ we can define $(-)\cap C_z^E(\th)\colon E_*(X)\to E_*(X)(\!(z)\!)$ which, a priori, has a larger codomain.

Finally, properties (a)--(e) pass to the limit.

\subsection*{Step 5: General topological spaces}

By the CW approximation theorem, there is a CW complex $X'$ with a weak homotopy equivalence $f\colon X'\to X.$ Then
\[
 a\cap C_z(\theta) = f_*( f_*^{-1}(a)\cap C_z(f^*\theta) )
\]
is well-defined, since this equation holds for a homotopy equivalence $f\colon X'\to X'$ by \eqref{s1eq3}. With this definition, the properties (a)--(e) carry over to $X.$\qed

\section{\texorpdfstring{Proof of Theorem~\ref{s1thm2}}{Proof of Theorem 1.2}}
\label{s5}

We verify Definition~\ref{s2dfn5}(a)--(c) for the graded module $V_*=\bigoplus E_{*-\rk\th_{\al,\al}}(X_\al),$ vacuum vector $\vac=e_*(1),$ $F$-shift operator \eqref{s3eq3}, and state-to-field correspondence \eqref{s1eq14}. Here, $e\colon\pt\to X_0$ is the H-space unit and $1\in E_0(\pt)=R^0.$

Writing $|a|_V=|a|+\rk\th_{\al,\al}$ for the shifted degree, we have
\[
|Y(a,z)b|_V=|Y(a,z,b)|+\rk\th_{\al+\be,\al+\be}=(|a|-\rk\th_{\al,\al})(|b|-\rk\th_{\be,\be})=|a|_V\cdot|b|_V,
\]
so that $Y$ preserves the grading of $V_*.$
\vskip\topsep

\noindent
(a)\hskip\labelsep Let $a\in E_*(X_\al),$ $b\in E_*(X_\be).$ As $e$ is a fixed point, $\Psi_*(t_k\bt\vac)=0$ for $k>0$ and $\Psi_*(t_0\bt\vac)=\vac.$ Hence $\cD(z)\vac=\vac.$ Let ${\varphi=(e,\id_{X_\be})\colon X_\be\to X_\vac\t X_\be}.$ Then
\begin{align*}
(\vac\bt b)\cap C_z^E(\th_{\vac,\be})&=\varphi_*(b)\cap C_z^E(\th_{\vac,\be})\\
&=\varphi_*\bigl( b \cap \varphi^*C_z^E(\th_{\vac,\be})\bigr)
\overset{\smash{\eqref{s1eq11}}}{=}\varphi_*(b\cap 1)=\vac\bt b,
\end{align*}
and so $Y(\vac,z)b = (\Phi_{\vac,\be})_*(\cD(z)\vac\bt b)=b,$ proving \eqref{s2eq7}. Similarly,
\begin{align*}
 Y(a,z)\vac=(\Phi_{\al,\vac})_*(\cD(z)\bt\id_{X_\vac})(a\bt\vac)=\cD(z)(a)
\end{align*}
is holomorphic with $\cD(0)(a)=a$ for $z=0,$ proving \eqref{s2eq6}.
\vskip\topsep

\noindent
(b)\hskip\labelsep
We have already shown $\cD(z)\vac=\vac.$ To prove \eqref{s2eq8}, we first need a lemma.

\begin{lem}
For the universal complex line bundle\/ $\mathcal{L}\to\CP^\iy$ and\/ $n\in\Z$
\e
\label{s5eq1}
 \sum\nolimits_{k\geqslant0} t_k\cap i_{z,c_1^E(\mathcal{L})} F(z,c_1^E(\mathcal{L}))^n w^k =
 \sum\nolimits_{\ell\geqslant0} t_\ell\; i_{z,w}F(z,w)^n w^\ell.
\e
Moreover, for all $a\in E_*(X_\al),$ $b\in E_*(X_\be)$ we have
\ea
\label{s5eq2}
(\cD_\al(w)a\bt b)\cap C_z^E(\th_{\al,\be})&=(\cD_\al(w)\t \id_{X_\be})
 \bigl[(a\bt b)\cap i_{z,w}C_{F(z,w)}^E(\th_{\al,\be})\bigr],\\
\label{s5eq3}
(a\bt\cD_\be(w)b)\cap C_z^E(\th_{\al,\be})&=(\id_{X_\al}\t\,\cD_\be(w))\bigl[(a\bt b)\cap i_{z,w}C^E_{F(z,\io(w))}(\th_{\al,\be})\bigr].
\ea
\end{lem}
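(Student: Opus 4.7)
The plan is to prove the three identities in order, deducing \eqref{s5eq2} and \eqref{s5eq3} from \eqref{s5eq1} by unpacking the definition of $\cD_\al(w)$, applying the naturality of $C^E_z$ from Theorem~\ref{s1thm1}, and invoking the hypotheses \eqref{s1eq9}--\eqref{s1eq10}. For \eqref{s5eq1}, the key input is the duality $\langle t_k,\xi_E^m\rangle=\de_{k,m}$, which by definition of the cap product gives $t_k\cap \xi_E^j=t_{k-j}$ (zero for $k<j$). Writing the pro-cohomology expansion
\[
i_{z,c_1^E(\mathcal{L})}F(z,c_1^E(\mathcal{L}))^n=\sum\nolimits_{j\geqslant 0} a_j^n(z)\,\xi_E^j,
\]
the coefficients $a_j^n(z)\in R(\!(z)\!)$ are identical to those appearing in $i_{z,w}F(z,w)^n=\sum_{j\geqslant 0}a_j^n(z)\,w^j$, since both series arise from the same manipulation of~\eqref{s2eq2}, differing only in the name of the second variable. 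Capping with $t_k$, multiplying by $w^k$, summing over $k$, and reindexing $\ell=k-j$ rearranges the left-hand side of \eqref{s5eq1} into $\sum_\ell t_\ell w^\ell\cdot\sum_j a_j^n(z)w^j$, which is the right-hand side.

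For \eqref{s5eq2}, I would begin by writing $\cD_\al(w)(a)\bt b=(\Psi_\al\t\id_{X_\be})_*\bigl((\sum_k t_k w^k)\bt a\bt b\bigr)$. The projection formula \eqref{s1eq3} together with Theorem~\ref{s1thm1}(a) converts the left-hand side of \eqref{s5eq2} into a pushforward along $(\Psi_\al\t\id_{X_\be})_*$ of a cap product involving $C^E_z\bigl((\Psi_\al\t\id)^*(\th_{\al,\be})\bigr)$ on $B\U(1)\t X_\al\t X_\be$. Hypothesis \eqref{s1eq9} decomposes this pullback K-class as $\pi_{B\U(1)}^*(\mathcal{L})\ot\pi_{\al,\be}^*(\th_{\al,\be})$, so identity \eqref{s1eq6} rewrites the $C^E_z$-class as $i_{z,\pi^* c_1^E(\mathcal{L})}C^E_{F(z,\pi^* c_1^E(\mathcal{L}))}(\pi_{\al,\be}^*\th_{\al,\be})$. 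Since this factors as a cup product of a $\pi^*\xi_E$-series pulled back from $B\U(1)$ and a class pulled back from $X_\al\t X_\be$, the cap product with the cross product $t_k\bt a\bt b$ splits, via the standard relation between cap and cross products, into $(t_k\cap\text{($\xi_E$-series)})\bt\bigl((a\bt b)\cap C_n(\th_{\al,\be})\bigr)$ at each order $n$ in the $F(z,\xi_E)^n$-expansion. Applying \eqref{s5eq1} order-by-order exchanges $\xi_E$ for $w$ on the $X_\al\t X_\be$-slot and leaves an overall factor $\sum_\ell t_\ell w^\ell$ on the $B\U(1)$-slot; pushforward by $(\Psi_\al)_*$ on this slot reassembles $\cD_\al(w)$ and produces the right-hand side of \eqref{s5eq2}.

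The proof of \eqref{s5eq3} proceeds in parallel with \eqref{s1eq10} in place of \eqref{s1eq9}. The additional input is the identity $c_1^E(\mathcal{L}^\vee)=\io(c_1^E(\mathcal{L}))$, which follows from the triviality of $\mathcal{L}\ot\mathcal{L}^\vee$ together with \eqref{s3eq6} and uniqueness of the formal inverse (Definition~\ref{s2dfn1}). This substitutes $F(z,\io(\pi^* c_1^E(\mathcal{L})))$ for $F(z,\pi^* c_1^E(\mathcal{L}))$ throughout; the same $\xi_E\leftrightarrow w$ duality underpinning \eqref{s5eq1} then converts the resulting $\xi_E$-series into $i_{z,w}C^E_{F(z,\io(w))}(\th_{\al,\be})$ on the $X_\al\t X_\be$-slot, and pushforward via $(\id_{X_\al}\t\Psi_\be)_*$ assembles $\cD_\be(w)$ on the $X_\be$-slot. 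I anticipate the main obstacle to be the careful bookkeeping between the two expansion operators $i_{z,c_1^E(\mathcal{L})}$ and $i_{z,w}$, and verifying that capping against $\{t_k\}$ implements exactly the substitution $\xi_E\mapsto w$ in a manner compatible with the pro-group cohomology framework of Notation~\ref{s3nota1}, where $c_1^E(\mathcal{L})$ is only nilpotent modulo finite skeleta of $\CP^\iy$. Once this point is nailed down, the remaining steps reduce to routine applications of cap product naturality and the decomposition of classes on product spaces.
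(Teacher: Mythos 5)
Your proposal is correct and follows essentially the same route as the paper: establish \eqref{s5eq1} from the duality $t_k\cap\xi_E^j=t_{k-j}$ plus reindexing, then prove \eqref{s5eq2} by unpacking $\cD_\al(w)$ via \eqref{s3eq3}, applying the projection formula, naturality of $C^E_z$, hypothesis \eqref{s1eq9}, identity \eqref{s1eq6}, and finally \eqref{s5eq1}, with \eqref{s5eq3} obtained in parallel from \eqref{s1eq10} and $c_1^E(\mathcal{L}^\vee)=\io(c_1^E(\mathcal{L}))$. The only additions beyond the paper's argument are your explicit remarks on the cap/cross compatibility and the pro-cohomology bookkeeping, both of which the paper leaves implicit and both of which you resolve correctly.
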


\begin{proof}
Introduce the expansion $i_{z,w}F(z,w)^n=\sum_{i\in\Z,j\geqslant0} F_{ij}^n\, z^iw^j.$ Then
\e
\label{s5eq4}
t_k\cap i_{z,c_1^E(\mathcal{L})}F(z,c_1^E(\mathcal{L}))^n=t_k\cap\sum\nolimits_{\substack{i\in\Z\\j\geqslant0}} F_{ij}^n\, z^i c_1^E(\mathcal{L})^j=\sum\nolimits_{\substack{i\in\Z\\j\geqslant0}} F_{ij}^n\, z^i t_{k-j},
\e
where $t_k=0$ for $k<0.$ Summing \eqref{s5eq4} over all $k,$ the summands with $k<j$ vanish, so we may restrict the sum to $k\geqslant j$ and reindexing by $\ell=k-j$ gives \eqref{s5eq1}:
\[
\sum\nolimits_{\substack{i\in\Z\\j\geqslant0}}\sum\nolimits_{\ell\geqslant 0} F_{ij}^n\, z^i w^j t_\ell w^\ell = \sum\nolimits_{\ell\geqslant 0} t_\ell\; i_{z,w}F(z,w)^n w^\ell
\]
For \eqref{s5eq2} we compute
\begin{align*}
 &(\cD_\al(w)a\bt b)\cap C_z^E(\th_{\al,\be})\overset{\mathclap{\eqref{s3eq3}}}{\;=\;}\sum\nolimits_{k\geqslant0} (\Psi_\al\t\id_{X_\be})_*(t_k\bt a\bt b)\cap C_z^E(\th_{\al,\be})w^k\\
 &\;=\;(\Psi_\al\t\id_{X_\be})_*\sum\nolimits_{k\geqslant0}(t_k\bt a\bt b)\cap (\Psi_\al\t\id_{X_\be})^*C_z^E(\th_{\al,\be})w^k\\ 
 &\overset{\mathclap{\eqref{s1eq9}}}{\;=\;}(\Psi_\al\t\id_{X_\be})_*\sum\nolimits_{k\geqslant0}(t_k\bt a\bt b)\cap C_z^E(\mathcal{L}\bt\th_{\al,\be})w^k\\
 &\overset{\mathclap{\eqref{s1eq6}}}{\;=\;}(\Psi_\al\t\id_{X_\be})_*\sum\nolimits_{k\geqslant0}(t_k\bt a\bt b)\cap i_{z,c_1^E(\mathcal{L})}C_{F(z,c_1^E(\mathcal{L}))}^E(\th_{\al,\be})w^k\\
  &\overset{\mathclap{\eqref{s5eq1}}}{\;=\;}(\Psi_\al\t\id_{X_\be})_*\sum\nolimits_{\ell\geqslant0}(t_\ell\bt a\bt b)\cap i_{z,w}C_{F(z,w)}^E(\th_{\al,\be})w^\ell\\
  &\;=\;(\cD_\al(w)\t \id_{X_\be})
 \bigl[(a\bt b)\cap i_{z,w}C_{F(z,w)}^E(\th_{\al,\be})\bigr].
\end{align*}
For \eqref{s5eq3} we similarly use \eqref{s1eq10} which replaces $c_1^E(\mathcal{L})$ by its formal inverse $\io(c_1^E(\mathcal{L}))$ above, so the same argument with $F(z,\io(w))$ in place of $F(z,w)$ gives \eqref{s5eq3}.
\end{proof}

\noindent
It is now easy to verify \eqref{s2eq8}: Let $a\in E_*(X_\al),$ $b\in E_*(X_\be).$ Then
\begin{align*}
 &Y(\cD_\al(w)a,z)b\overset{\mathclap{\eqref{s1eq14}}}{\;=\;}(\Phi_{\al,\be})_*(\cD_\al(z)\bt\id_\be)[(\cD_\al(w)a\bt b)\cap C_z^E(\th_{\al,\be})]\\
 &\overset{\mathclap{\eqref{s5eq2}}}{\;=\;}(\Phi_{\al,\be})_*(\cD_\al(z)\cD_\al(w)\bt\id_\be)
 \bigl[(a\bt b)\cap i_{z,w}C_{F(z,w)}^E(\th_{\al,\be})\bigr]\\
 &\overset{\mathclap{\eqref{s2eq4}}}{\;=\;} i_{z,w}Y(a,F(z,w))b.
\end{align*}
\vskip\topsep

\noindent
(c)\hskip\labelsep{}
Firstly, ${\Phi\circ(\Psi\t\Psi)\circ\de}\simeq{\Psi\circ(\Phi\t\id_{B\U(1)})}$ and $\De_*(t_k)=\sum_{i+j=k}t_i\bt t_j$ imply
\e
 \label{s5eq5}
 \cD_{\al+\be}(z)(\Phi_{\al,\be})_*=(\Phi_{\al,\be})_*(\cD_\al(z)\bt\cD_\be(z)).
\e
Let $a\in E_*(X_\al),$ $b\in E_*(X_\be),$ $c\in E_*(X_\ga).$ On the one hand
\begin{align*}
 &Y(Y(a,z)b,w)c=(\Phi_{\al+\be,\ga})_*(\cD_{\al+\be}(w)\bt\id_\ga)\\
 &\qquad\enskip\bigl[(\Phi_{\al,\be})_*(\cD_\al(z)\bt\id_\be)[(a\bt b)\cap C_z^E(\th_{\al,\be})]\bt c \cap C_w^E(\th_{\al+\be,\ga})\bigr]\\
 &\overset{\smash{\mathclap{\eqref{s5eq5}}}}{\quad=\quad}(\Phi_{\al+\be,\ga})_*(\Phi_{\al,\be})_*(\cD_\al(w)\bt\cD_\be(w)\bt\id_\ga)\\
 &\qquad\enskip\bigl[(\cD_\al(z)\bt\id_\be\bt\id_\ga)\bigl((a\bt b\bt c)\cap C_z^E(\th_{\al,\be})\cap (\Phi_{\al,\be}\t\id_\ga)^*C_w^E(\th_{\al+\be,\ga})\bigr)\bigr]\\
&\overset{\smash{\mathclap{\eqref{s2eq4}, \eqref{s5eq2}}}}{\quad=\quad}(\Phi_{\al+\be,\ga})_*(\Phi_{\al,\be})_*(\cD_\al(w)\cD_\al(z)\bt\cD_\be(w)\bt\id_\ga)\\
 &\qquad\enskip\bigl[(a\bt b\bt c)\cap C_z^E(\th_{\al,\be})\cap i_{w,z}C_{F(w,z)}^E(\th_{\al,\ga})\cap C_w^E(\th_{\be,\ga})\bigr],
\end{align*}
and on the other hand
\begin{align*}
 &i_{z,w} Y(a, F(z,w))Y(b,w)c=i_{z,w}(\Phi_{\al,\be+\ga})_*(\cD_\al(F(z,w))\bt\id_{\be+\ga})\\
 &\qquad\enskip\bigl[ \bigl( a\bt(\Phi_{\be,\ga})_*(\cD_\be(w)\bt\id_\ga)\bigl[(b\bt c)\cap C_w^E(\th_{\be,\ga})\bigr] \bigr)\cap C_{F(z,w)}^E(\th_{\al,\be+\ga}) \bigr]\\
 &\overset{\smash{\mathclap{\eqref{s5eq5},\eqref{s1eq8}}}}{\quad=\quad}i_{z,w}(\Phi_{\al,\be+\ga})_*(\id_\al\bt\Phi_{\be,\ga})_*(\cD_\al(F(z,w))\bt\id_\be\bt\id_\ga)\\
  &\qquad\enskip\bigl[ (\id_\al\bt\,\cD_\be(w)\bt\id_\ga)\bigl[(a\bt b\bt c) \cap C_w^E(\th_{\be,\ga})\bigr]\cap C_{F(z,w)}^E(\th_{\al,\be})\cap C_{F(z,w)}^E(\th_{\al,\ga}) \bigr]\\
 &\overset{\smash{\mathclap{\eqref{s2eq4},\eqref{s5eq3}}}}{\quad=\quad}(\Phi_{\al,\be+\ga})_*(\id_\al\bt\Phi_{\be,\ga})_*(\cD_\al(w)\cD_\al(z)\bt\cD_\be(w)\bt\id_\ga)\\
  &\qquad\enskip\bigl[(a\bt b\bt c) \cap C_w^E(\th_{\be,\ga})\cap C_z^E(\th_{\al,\be})\cap i_{z,w}C_{F(z,w)}^E(\th_{\al,\ga}) \bigr].
\end{align*}
As $Y(Y(a,z)b,w)c$ and $Y(a, F(z,w)) Y(b,w)c$ are both expansions in negative powers of $F(z,w)$ of the same series in different variables, there exist some $N \gg 0$ with $F(z,w)^N Y(Y(a,z)b,w)c = F(z,w)^N Y(a, F(z,w))Y(b,w)c,$ see \eqref{s2eq3}.\vskip\topsep

The same calculations show that \eqref{s1eq15} is a nonlocal vertex $F$-algebra and that the state-to-field correspondence $\overline{Y}(a,z)b$ preserves the degree shifted by $2\chi(\al,\al).$ It remains to prove $(-1)^{ab}\overline{Y}(a,z)b = \cD_{\al+\be}(z)\overline{Y}(b,\io(z)a).$ Notice $\si^*(\overline{C}^E_z(\th_{\al,\be}))=\overline{C}^E_{\io(z)}(\th_{\be,\al})$ for the swap $\si\colon X_\be\t X_\al\to X_\al\t X_\be.$ Using $\Phi_{\be,\al}\simeq\Phi_{\al,\be}\circ\si$ we find
\begin{align*}
&\cD_{\al+\be}(z)\overline{Y}(b,\io(z))a=\cD_{\al+\be}(z)(\Phi_{\be,\al})_*(\cD_\be(\io(z))\bt\id_\al)\bigl[(b\bt a)\cap \overline{C}^E_{\io(z)}(\th_{\be,\al})\bigr]\\
 &\;=\;\cD_{\al+\be}(z)(\Phi_{\al,\be})_*(\id_\al\bt\cD_\be(\io(z)))\si_*\bigl[(b\bt a)\cap\si^*\overline{C}^E_{z}(\th_{\al,\be})\bigr]\\
  &\overset{\smash{\mathclap{\eqref{s5eq5}}}}{\;=\;}(\Phi_{\al,\be})_*(\cD_\al(z)\bt\id_\be)\bigl[\si_*(b\bt a)\cap\overline{C}^E_{z}(\th_{\al,\be})\bigr]\\
  &\;=\;(\Phi_{\al,\be})_*(\cD_\al(z)\bt\id_\be)\bigl[(-1)^{ab}(a\bt b)\cap\overline{C}^E_{z}(\th_{\al,\be})\bigr]=(-1)^{ab}\overline{Y}(a,z)b.\hspace{1.15cm}\qed
\end{align*}

\begin{rem}
For the additive formal group law $\mathbb{G}_\mathrm{a}$ and ordinary homology, this was shown by Joyce \cite[Thm.~3.14]{Joy}. When $\X$ is the derived category of a finite quiver or of certain smooth projective complex varieties, then taking $F(X,Y) = X+Y$ in (\ref{s1eq15}) gives a (super) lattice vertex algebra \cite[Thm.~5.7]{Gro} \cite[Thm.~5.19]{Joy}.
\end{rem}

\begin{rem}
\label{StiefelWhitneyRemark}
A similar construction applies to H-spaces $X$ with $B\O(1)$-actions, the classifying space for \emph{real} line bundles, and homology with $\Z_2$-coefficients. Since $H^*(B\O(1))=\Z_2\llbracket \xi\rrbracket$ there is a shift operator $\cD(u)\colon H_*(X;\Z_2)\to H_*(X;\Z_2)\llbracket u\rrbracket$ for $u$ a variable of degree $-1.$ One can then build, just as in Theorem~\ref{s1thm1}, an operator $(-) \cap W_u(\th)$ of degree $-\rk\th_{\al,\be},$ where $\th_{\al,\be}\in KO(X_\al\t X_\be),$ with normalization $a\cap W_u(L)=a\cap(u+w_1(L))$ for the first Stiefel--Whitney class of a real line bundle $L\to X.$ Then $Y(a,z)b =(\Phi_{\al,\be})_*(\cD_\al(u)\bt\id_\be)\bigl[(a\ot b)\cap W_u(\th_{\al,\be})\bigr]$ makes $V=H_*(X;\Z_2)$ into a vertex algebra over $\Z_2$. 
\end{rem}

\subsection*{Acknowledgements.} The authors thank Dominic Joyce for many discussions and suggestions. They also thank Mikhail Kapranov, Kobi Kremnitzer, Sven Meinhardt, and Konrad Voelkel for helpful conversations. 

\medskip

\noindent J.~Gross, The Mathematical Institute, Radcliffe
Observatory Quarter, Woodstock Road, Oxford, OX2 6GG, U.K.\\
Email: {\tt jacob.gross@maths.ox.ac.uk}\\

\noindent M.~Upmeier, Department of Mathematics, University of Aberdeen, Fraser Noble Building, Elphinstone Rd, Aberdeen, AB24 3UE, U.K.\\E-mail: {\tt markus.upmeier@abdn.ac.uk.}

\end{document}